\documentclass[10pt, a4paper]{amsart}
\usepackage{amssymb, amsmath, amsfonts, amsthm, verbatim}
\usepackage{amsaddr}
\usepackage[numbers]{natbib}
\usepackage{tikz}
\usepackage{fullpage}

\pagestyle{plain} \frenchspacing

\newtheorem{lemma}[equation]{Lemma}

\newtheorem{theorem}[equation]{Theorem}

\newtheorem{conjecture}[equation]{Conjecture}
\newtheorem{example}[equation]{Example}

\newcommand{\LL}{\mathcal{L}}
\newcommand{\wh}{\widehat{\LL(G)}}

\newcommand{\M}{M\"obius }
\newcommand{\sg}{\leqslant}

\newcommand{\sdp}{\colon\!}

\frenchspacing

\begin{document}

\title{The order complex of $PGL_2(p^{2^n})$ is\\contractible when $p$ is odd}

\author{Emilio Pierro}
\address{\textnormal{Department of Mathematics,\\
London School of Economics and Political Science,\\
Houghton Street,\\
London, WC2A 2AE}}
\email{e.pierro@mail.bbk.ac.uk}
\date{\today}

\begin{abstract}
Given a group $G$, its lattice of subgroups $\mathcal{L}(G)$ can be viewed as a simplicial complex in a natural way.
The inclusion of $1_G, G \in \mathcal{L}(G)$ means that $\mathcal{L}(G)$ is contractible, and so the topology of the order complex $\widehat{\mathcal{L}(G)} := \mathcal{L}(G) \setminus \{1_G,G\}$ is usually determined instead.
In this short note we consider the homotopy type of $\widehat{\mathcal{L}(G)}$ where $G \cong PGL_2(p^{2^n})$, $p \geq 3$, $n \geq 1$ and show that $\widehat{\mathcal{L}(G)}$ is contractible.
This is consistent with a conjecture of Shareshian on the homotopy type of order complexes of finite groups.
\end{abstract}

\maketitle

\section{Introduction}
Given a poset $P$, the order complex $\Delta(P)$ is the simplicial complex whose vertices are the elements of $P$ and more generally whose $k$-cells are chains of length $k+1$ in $P$.
In the case where $P$ is the subgroup lattice $\LL(G)$ of a finite group $G$, this yields a contractible space and so its topology is rather uninteresting.
For a finite group $G$, then, we consider the order complex of $\wh := \LL(G) \setminus \{G,1_G\}$.
The following example serves to illustrate that if $G$ and $H$ are two groups with identical composition factors, then their order complexes can vary wildly.

\begin{example} Let $G$ be a group with $|G| = p^2$ where $p$ is prime.
If $G$ is cyclic, then $\wh$ is contractible.
If $G$ is elementary abelian, then $\wh$ is a wedge of $p$ spheres, that is, homotopy equivalent to the disjoint union of $p+1$ points.
\end{example}

In the case where $G$ is a solvable group, it was shown by Shareshian \cite{sharshell} that the order complex of $G$ is shellable if and only if $G$ is solvable.
Recall that a shellable topological space is homotopy equivalent to a wedge of equidimensional spheres.
This led to the following conjecture of Shareshian \cite[Conjecture 1.1 (A)]{shartop}.

\begin{conjecture}[Shareshian]
Let $G$ be a finite group and let $H < G$.
Then, the order complex of $\widehat{\LL(H,G)} = \{K \mid H<K<G\}$ has the homotopy type of a wedge of spheres.
\end{conjecture}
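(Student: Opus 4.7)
The natural attack is induction on the index $|G:H|$. The base case $|G:H|$ prime is trivial since $\widehat{\LL(H,G)}$ is empty, conventionally a $(-1)$-sphere. For the inductive step, one would try to decompose $\widehat{\LL(H,G)}$ into pieces controlled by the inductive hypothesis, using standard tools from poset topology such as the Quillen fibre lemma, Bj\"orner--Wachs lexicographic shellings, or discrete Morse theory. The first reduction is cheap: if some $K_0 \in \widehat{\LL(H,G)}$ is comparable with every other element of the interval, then $K_0$ is a cone point and the order complex is contractible, hence a wedge of (zero) spheres.

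A more substantive reduction uses a proper non-trivial normal subgroup $N \nsg G$ with $H \leq N$, when one exists. The projection $K \mapsto KN$ from $\widehat{\LL(H,G)}$ onto $\widehat{\LL(N,G)} \cong \widehat{\LL(G/N)}$ has fibres governed by intervals of the form $\widehat{\LL(H,K)}$ for intermediate $K$, all of which fall under the inductive hypothesis, as does the base $\widehat{\LL(G/N)}$. A Quillen-style homotopy argument would then attempt to assemble a wedge of spheres from a wedge-of-spheres base with wedge-of-spheres fibres, using the join formula for order complexes of products. For the solvable case, Shareshian's shellability theorem \cite{sharshell} already delivers equidimensional wedges of spheres, so the genuinely new content is for the non-solvable inductive step.

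\textbf{Main obstacle.} The principal difficulty is the \emph{primitive} case, where no intermediate normal subgroup exists: after passing to cores one reduces to $H$ core-free in an almost simple group $G$ acting primitively on the cosets of a maximal subgroup above $H$. Here the classification of finite simple groups enters essentially, and each family of simple groups must be treated separately using detailed maximal-subgroup data. For the family $PSL_2(q)$ (and hence $PGL_2(q)$), Dickson's classification shows that the lattice structure depends delicately on the prime factorisations of $q \pm 1$, so the homotopy type of $\widehat{\LL(H,G)}$ need not be uniform across the family. This is exactly the complication the paper addresses for $q = p^{2^n}$ with $p$ odd, where the arithmetic restriction on $q$ controls the divisor structure and makes a direct contractibility argument feasible; any prospective proof of the full conjecture would need a version of this analysis for every simple group and every choice of $H$.
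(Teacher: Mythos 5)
The statement you are trying to prove is Shareshian's conjecture, which is open; the paper does not prove it and only verifies one very special instance (the full interval $\widehat{\LL(1_G,G)}$ for $G \cong PGL_2(p^{2^n})$, $p$ odd), so your proposal must be judged as an attempted proof of the general conjecture, and as such it has two fatal gaps. First, the inductive step does not go through: Quillen's Fiber Lemma yields a homotopy equivalence only when the fibres are \emph{contractible}. When the fibres and the base are merely wedges of spheres there is no general principle that assembles the total poset into a wedge of spheres --- the relevant homotopy colimit (or the spectral sequence of the map $K \mapsto KN$) can produce essentially arbitrary attaching maps, and the "join formula" you invoke applies to products of posets, not to this fibration-like situation. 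If such an assembly lemma existed, the conjecture would reduce immediately to the case of characteristically simple quotients, and it has not; this is precisely why the problem is hard. Second, you correctly identify the primitive/almost simple case as the crux, but then offer no argument for it: you observe that it requires the classification of finite simple groups and a family-by-family analysis of maximal subgroups, and stop. Acknowledging the obstacle is not the same as overcoming it, so what you have is a plausible reduction strategy (with an unproved gluing step) terminating in the genuinely open case.

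For comparison, the paper's actual contribution is orthogonal to your outline. It makes no induction on $|G:H|$ and no use of normal subgroups; instead it fixes the single family $G \cong PGL_2(p^{2^n})$, $H = 1$, and applies the Bj\"orner--Walker complementation formula (Theorem \ref{complem}) with the complements taken to be the subgroups $\langle x \rangle$ for $x$ an outer involution in class $2B$. The work then consists of showing that the ascending link $(\langle x\rangle, G)$ is contractible by listing the few dihedral overgroups of $x$ and their intersections, whence $\wh$ is a wedge over an empty (after retraction) index set, i.e.\ contractible. The arithmetic hypothesis $q = p^{2^n}$ is used exactly to exclude maximal subfield subgroups, which keeps this overgroup analysis finite and uniform --- consistent with your closing remark, but it is a verification of one data point of the conjecture, not a step toward the general statement.
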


If $G$ has a non-trivial Frattini subgroup then a result of Quillen, which we state in the following section, shows that the order complex of $G$ is contractible.
Thus, the determination of the homotopy type $\wh$ when $G$ has simple composition factors and trivial Frattini subgroup is wide open.
Hence it seems natural to consider the case when $G$ is an almost simple group.
We remind the reader that a group $G$ is almost simple if there exists a simple group $T$ such that $T \sg G \sg$ Aut$(T)$.

\smallskip

The homotopy types of minimal simple groups (i.e. those which do not contain simple subgroups) were also considered by Shareshian \cite[Lemma 3.8]{sharshell}.
To the best of the author's knowledge, the homotopy type of $\wh$ is known in only a small number of other cases, such as those appearing in the work of Kramarev and Lokutsievski\u\i \cite{kramlok}.
In unpublished work of Shareshian and the author, they determined that when $G \cong R(3) \cong PSL_2(8) \sdp 3$, $\wh$ is homotopy equivalent to a wedge of $504 = |G'|$ $2$-spheres.

\smallskip

In this short note we determine an infinite family of almost simple groups order complex is contractible.
\begin{theorem} \label{prop}
Let $G \cong PGL_2(p^{2^n})$ where $p \geq 3$ is prime and $n\geq1$. Then $\wh$ is contractible.
\end{theorem}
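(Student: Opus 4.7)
The plan is to deformation retract $\wh$ onto a contractible subposet, organised around the index-$2$ normal subgroup $N := PSL_2(q)$.

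The basic observation is that for any subgroup $K \sg G$, either $K \sg N$, or else $KN = G$ and $K \cap N$ has index $2$ in $K$. Thus $\phi \colon K \mapsto K \cap N$ is an interior operator on $\LL(G)$: monotone, idempotent, and with $\phi(K) \sg K$. Its fixed points in $\wh$ form $\widehat{\LL(N)} \cup \{N\}$, a subposet with $N$ as maximum and hence contractible. The obstacle to concluding directly is that $\phi(K) = 1$ can occur; by order considerations this happens exactly when $K = \langle z \rangle$ for an involution $z \in G \setminus N$. Under our hypothesis on $q$, there is a single $G$-class of such involutions (the \emph{non-split} ones), each with centraliser $C_G(z) \cong D_{2(q+1)}$.

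The second step will be a technical lemma: every such vertex $\langle z \rangle \in \wh$ has contractible open upper interval $(\langle z \rangle, G) \subset \LL(G)$. Since $\langle z \rangle$ is a minimal vertex of $\wh$, its link in the order complex equals the order complex of this interval, and a standard deletion lemma then permits us to remove $\langle z \rangle$ without changing the homotopy type of $\wh$. Iterating over all non-split involution subgroups yields a subposet $\wh^{\#}$ on which $\phi$ is everywhere defined, so $\wh \simeq \wh^{\#} \simeq \widehat{\LL(N)} \cup \{N\}$, which is contractible. The hypothesis $q = p^{2^n}$ with $n \og 1$ enters through the congruence $q \equiv 1 \pmod 8$: it forces $(q+1)/2$ to be odd, pinning down the $2$-local structure of $G$ and, via Dickson's subgroup classification, sharply restricting the subgroups of $G$ that can contain a given non-split involution.

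The main obstacle is this removability lemma. The maximal elements of $(\langle z \rangle, G)$ include $C_G(z) = D_{2(q+1)}$ together with the further conjugates of $D_{2(q+1)}$ in which $z$ appears as a reflection rather than as the central involution, so the interval is not conic. I expect the argument to require a secondary closure or interior operator on this interval — plausibly $K \mapsto \langle C_K(z), z \rangle$, or reduction via $K \mapsto K \cap C_G(z)$ combined with separate treatment of the dihedral overgroups in which $z$ is a reflection. Carrying out this secondary retraction against the full list of subgroups from Dickson's classification is where the technical core of the proof will lie.
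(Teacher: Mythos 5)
Your outer framework is sound and is essentially the paper's: both arguments pivot on the order-$2$ complements of the socle, namely the subgroups $\langle z\rangle$ with $z$ a non-split involution, and both reduce the theorem to the contractibility of the upper interval $(\langle z\rangle, G)$. (The paper closes with the Bj\"orner--Walker complementation formula rather than your interior operator $K \mapsto K \cap N$; either closing works once that interval is handled.) The genuine gap is that this key lemma is precisely the step you leave open, and it is where all of the group theory lives. Worse, both retractions you float would fail on the interval as you have defined it, i.e.\ inside the full lattice: taking $K = \langle z, a\rangle \cong D_{2d}$ with $d>1$ odd dividing $q\pm 1$ and $z$ inverting $a$, one gets $C_K(z)=\langle z\rangle$, so $K \mapsto K\cap C_G(z)$ and $K\mapsto\langle C_K(z),z\rangle$ both fall out of the open interval --- exactly the defect you diagnosed for $\phi$ at the bottom of $\wh$, reappearing one level up. Also, the hypothesis $q=p^{2^n}$ is not used only through $q\equiv 1\pmod 8$: its essential role is that every proper subfield subgroup then lies inside the socle, so no subfield subgroup is a maximal overgroup of $z$ (for $q=p^{6}$ one still has $q\equiv 1\pmod 8$, yet $PGL_2(p^2)$ is a maximal subgroup meeting the class of non-split involutions).

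What is missing is a preliminary application of Quillen's fiber lemma replacing $\wh$ by the subposet of subgroups that are intersections of maximal subgroups; the paper does this once, globally, at the outset. After that reduction, and given the subfield exclusion above, the only maximal subgroups containing $z$ are dihedral of orders $2(q\pm1)$, and the reduced interval $(\langle z\rangle,G)$ consists of $1+(q+1)/2$ copies of $D_{2(q+1)}$ (one of them $C_G(z)$), $(q+1)/2$ copies of $D_{2(q-1)}$, and $(q+1)/2$ four-groups of shape $2A_1B_2$; this rests on a counting argument showing that two such dihedral overgroups meet in $\langle z\rangle$ or a four-group. On this reduced interval your operator $K\mapsto C_K(z)$ does work in one stroke: every non-central dihedral maps to a four-group, every four-group is fixed, and all the four-groups lie in $C_G(z)$ since they centralise $z$, so the image is a cone on $C_G(z)$. (The paper instead deletes the non-central dihedrals class by class via the Kratzer--Th\'evenaz lemma before contracting onto $C_G(z)$.) Until the Quillen reduction, the subfield exclusion and this enumeration are actually carried out, what you have is a correct plan rather than a proof.
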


We do not consider the groups $PGL_2(2^n)$ since these are much harder to attack owing to the the abundance of subfield subgroups which complicates the determination of the homotopy type.
In the case $G \cong PSL_2(4)$, the homotopy type of $G$ was determined by Shareshian \cite{sharshell} and it is homotopy equivalent to a wedge of $60$ $1$-spheres.
The full automorphism group of $PSL_2(q^2)$, where $q$ is odd, contains a non-split extension of the socle.
We mention that the non-split extension $PSL_2(9).2$ is isomorphic to a point-stabiliser of $M_{11}$ in its natural permutation representation on $11$ points.
It is immediate \cite[Proposition 1.8]{kt2} that when $G$ is isomorphic to the non-split extension $PSL_2(q^2).2$, $\wh$ is contractible.
On the contrary, the extensions $P\Sigma L_2(q^l)$ seem too difficult to approach in general.

\smallskip

We follow the notation of \cite{atlas} and explicitly mention the following conventions.
The notation $nX$ is used to denote a conjugacy class of elements of order $n$ and where $|C_G(nA)| \geq |C_G(nB)|$, etc.
The notation $2A_1B_2$ denotes an elementary abelian group of exponent $2$ containing one element from the conjugacy class $2A$ and two elements from the conjugacy class $2B$.

\section{Proof of Proposition \ref{prop}} \label{proof}
In general, to consider the full poset $\wh$ for a finite group $G$ would be quite challenging.
The first reduction we can make follows from Quillen's Fiber Lemma \cite{quillen} which we state in the following specific form.

\begin{lemma}[Quillen] Let $f \colon X \to X$ be the inclusion map of the poset $X$ such that $f(x) \leq x$ for all $x \in X$.
Let $f/x = \{x \in X | f(x) \leq x\}$, the ``fiber'' of $x$.
If $f/x$ is contractible for all $x \in X$, then $f$ is a homotopy equivalence.
\end{lemma}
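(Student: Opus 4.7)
The plan is to establish the lemma in its standard formulation for a poset map $f \colon X \to Y$ (the self-map case being a specialisation with $Y=X$) via the mapping cylinder construction. I would first define the mapping cylinder poset $M(f)$ on the underlying set $X \sqcup Y$, inheriting the order from each summand and with additional relations $x \sg y$ (for $x \in X$, $y \in Y$) whenever $f(x) \sg y$ in $Y$. The map $f$ then factors as $X \stackrel{i}{\hookrightarrow} M(f) \stackrel{r}{\twoheadrightarrow} Y$, with $r(x) = f(x)$ for $x \in X$ and $r(y) = y$ for $y \in Y$, and it suffices to show that both $i$ and $r$ induce homotopy equivalences on order complexes.

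The projection $r$ is always a homotopy equivalence, independently of any fiber hypothesis: the inclusion $Y \hookrightarrow M(f)$ is a section of $r$, and the composition $M(f) \to Y \hookrightarrow M(f)$ is pointwise comparable with the identity on $M(f)$, since $x \sg f(x)$ holds in $M(f)$ for every $x \in X$. Because pointwise comparable poset maps induce homotopic maps on order complexes, $r$ is a homotopy equivalence.

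The content of the lemma then lies in showing that the inclusion $i$ is a homotopy equivalence, which is where the contractibility of the fibers $f/y$ enters. The plan is to build $M(f)$ up from $i(X)$ inductively, adjoining elements of $Y$ one at a time in a linear extension of the order. At each stage, the element $y$ being adjoined has, within the current subposet, a down-set that is the union of $f/y$ with the previously adjoined elements of $Y$ lying below $y$; under an inductive hypothesis that this down-set is contractible, the adjunction attaches a cone to a contractible subcomplex and so preserves the homotopy type. Combining this with the previous step, $f = r \circ i$ is a composition of homotopy equivalences.

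The main obstacle is the inductive gluing: one must simultaneously verify that each newly adjoined element serves as a cone point over a contractible subcomplex, and this requires careful tracking of the joint contribution of $f/y$ and of the previously adjoined elements of $Y$ lying below $y$. The cleanest route is to recast the whole argument as a single application of the Nerve Lemma to a cover of $\Delta(M(f))$ by contractible subcomplexes indexed by elements of $Y$, verifying both that the cover is good (non-empty intersections are contractible, which reduces to the fiber hypothesis together with downward closure in $Y$) and that its nerve is homotopy equivalent to $\Delta(Y)$.
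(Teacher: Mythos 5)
The paper offers no proof of this lemma to compare against: it is quoted (in a somewhat garbled form --- $f$ is called ``the inclusion map'' of $X$ into itself, and the fiber $f/x$ is mis-typeset so that, as written, it equals all of $X$) from Quillen's paper. You have sensibly reconstructed the intended general statement, namely Quillen's Theorem A for posets: if $f\colon X\to Y$ is a poset map with every $f/y=\{x\in X\mid f(x)\leq y\}$ contractible, then $f$ induces a homotopy equivalence of order complexes. So the only question is whether your argument for that statement is complete.

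Your mapping-cylinder reduction is correct as far as it goes: $M(f)$ is the right object, and the proof that $r\colon M(f)\to Y$ is a homotopy equivalence (a section together with the order homotopy from $j\circ r\geq\mathrm{id}_{M(f)}$) is complete. The gap is exactly where you flag it. When you adjoin $y\in Y$, the subcomplex being coned off is $\Delta(D_y)$ with $D_y=(f/y)\cup\{y'\in Y\mid y'<y\}$, and its contractibility is not the hypothesis of the lemma; it must be proved. Your proposed escape via the Nerve Lemma does not repair this: for the natural cover of $\Delta(M(f))$ by the cones $\Delta(M(f)_{\leq y})$, $y\in Y$, a non-empty intersection $\bigcap_i\Delta(M(f)_{\leq y_i})$ is the order complex of the set $L$ of common lower bounds of the $y_i$ in $Y$ together with $f^{-1}(L)$; this retracts onto $\Delta(L)$, which can be disconnected (two incomparable common lower bounds with no common lower bound of their own), so the cover need not be good, and in any case the nerve records antichains admitting common lower bounds and is not $\Delta(Y)$. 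The inductive route can instead be closed directly by strengthening the induction: prove, for every poset map with contractible fibers and every down-closed $S\subseteq Y$, that the inclusion $X\hookrightarrow X\cup_f S$ is a homotopy equivalence. Then $D_y$ is precisely $X'\cup_{f'}S'$ for $X'=f/y$, $f'=f|_{X'}\colon f/y\to Y_{\leq y}$ (whose fibers $f'/z=f/z$ are again contractible) and the strictly smaller down-set $S'=Y_{<y}$, so the inductive hypothesis gives $D_y\simeq f/y\simeq\mathrm{pt}$, and the gluing lemma finishes the step. With that insertion your argument becomes a complete and standard proof; alternatively, Walker's contractible-carrier argument yields the lemma in one stroke and avoids the cylinder altogether.
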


For a finite group $G$ this allows us to restrict $\wh$ to the subposet consisting of subgroups which occur as the intersection of maximal subgroups.
By abuse of notation we shall denote this subposet by $\wh$.
Dually, we could also consider the subposet consisting of all subgroups generated by their ``minimal'', i.e. prime order, elements; although in practice this subposet seems substantially harder to determine.
We note that in the case of $PGL_2(q)$, where $q \equiv 1$ mod $4$, this would allow us to omit the maximal parabolic subgroups, since all of their prime order elements are contained in $PSL_2(q)$.
Quillen's Lemma would not allows us to simply take the intersection of these two dual posets.
One would have to first take the ``usual'' subposet consisting of intersections of maximal subgroups, and then consider the minimal elements of this subposet, which need not necessarily still contain the prime order subgroups.

\smallskip

The connections between the order complex of $\wh$ and the \M function of $G$ are very close; for example $\mu_G(1_G) + 1 = \chi(\wh)$ \cite[Theorem 3]{ROTA}.
However, these connections are quite natural once the \M function is correctly interpreted in terms of Algebraic Topology.
In particular, for the determination of the \M function of $G$ it is sufficient to know which subgroups $H \sg G$ occur as intersections of maximal subgroups, otherwise $\mu(G,H)=0$ \cite{hall}.
This was determined for the groups $PGL_2(q)$ by Downs in his thesis \cite{phdowns}, which, unfortunately the author has not been able to obtain.

\smallskip

Nevertheless, the maximal subgroups of $G \cong PGL_2(q)$ can be found in \cite[Table 8.7]{bhrd} and since we restrict ourselves to the case where $G$ does not contain maximal subfield subgroups, we are able to easily determine the subgroups necessary.
It is worth mentioning that the case of determining $\wh$ when $G$ is an almost simple group is often more tractable than in the case where $G$ is simple since we are able to use the following result of Bj\"orner and Walker \cite[Theorem 1.1]{bjwa}.

\begin{theorem}[Bj\"orner--Walker] \label{complem} If $L$ is a bounded lattice, $s \in \hat{L}$ and the complements of $s$ form an antichain, then
\[\hat{L} \simeq \bigvee_{x \perp s} \left( \sum((\hat{0},x)*(x,\hat{1})) \right).\]\end{theorem}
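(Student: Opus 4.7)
The plan is to apply Theorem~\ref{complem} with $L=\LL(G)$ and $s=PSL_2(q)$, the unique index-$2$ subgroup of $G=PGL_2(q)$ where $q=p^{2^n}$, and then show that every summand of the resulting wedge decomposition is contractible.

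First I identify the complements of $s$ in $\LL(G)$. Since $[G:s]=2$, any complement $x$ must satisfy $x\cap s=1$ and $xs=G$, forcing $x=\langle t\rangle$ for some involution $t\in G\setminus s$, that is, a $2B$-involution in the paper's notation. These complements are all atoms of the lattice and trivially form an antichain. Theorem~\ref{complem} then gives
\[
\wh \;\simeq\; \bigvee_{t\in 2B}\,\Sigma\bigl((\hat{0},\langle t\rangle)*(\langle t\rangle,\hat{1})\bigr).
\]
Since $\langle t\rangle$ is an atom, $(\hat{0},\langle t\rangle)=\emptyset$, and each wedge summand reduces to $\Sigma(\langle t\rangle,G)$, the suspension of the order complex of subgroups strictly between $\langle t\rangle$ and $G$.

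The core claim is that $(\langle t\rangle,G)$ is contractible for every $2B$-involution $t$, which I would establish by showing that $t$ lies in a \emph{unique} maximal subgroup of $G$, namely $C_G(t)=D_{2(q+1)}$. From the list in \cite[Table 8.7]{bhrd}, the relevant maximal subgroups of $G$ are $PSL_2(q)$, the Borel $E_q\sdp C_{q-1}$, the split dihedral $D_{2(q-1)}$, the non-split dihedral $D_{2(q+1)}$, and the subfield subgroup $PGL_2(p^{2^{n-1}})$. By definition $t\notin PSL_2(q)$. The Borel and $D_{2(q-1)}$ contain only $2A$-involutions, since $q\equiv 1\pmod 8$ makes $-1$ a square in $\FF_q$ and so the Levi involution and the dihedral reflections all lift to $SL_2(q)$. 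The subfield subgroup $PGL_2(p^{2^{n-1}})$ is entirely contained in $PSL_2(q)$, because $[\FF_q^*:\FF_{p^{2^{n-1}}}^*]=p^{2^{n-1}}+1$ is even, so every element of the subfield is a square in $\FF_q^*$. Finally, $D_{2(q+1)}$ has exactly one $2B$-involution, namely the unique order-$2$ element of its cyclic part, realised in $GL_2(q)$ as multiplication by $\sqrt{a}$ on $\FF_{q^2}$ for a non-square $a\in\FF_q^*$, with non-square determinant $-a$. Hence $C_G(t)=D_{2(q+1)}$ is the unique maximal overgroup of $\langle t\rangle$.

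This uniqueness forces every subgroup $H$ with $\langle t\rangle<H<G$ to lie in $C_G(t)$, so the poset $(\langle t\rangle,G)$ has $C_G(t)$ as a maximum element and its order complex is a cone, hence contractible. Each wedge summand $\Sigma(\langle t\rangle,G)$ is then the suspension of a contractible space, hence contractible, and a wedge of contractible (pointed) spaces is contractible. The main obstacle is the maximal-subgroup case analysis, and in particular the inclusion $PGL_2(p^{2^{n-1}})\leq PSL_2(q)$, which uses precisely the doubled-even exponent structure $q=p^{2^n}$ and explains why the same approach does not go through for $PGL_2(2^n)$, where subfield subgroups proliferate and a given outer involution can lie in many maximal subgroups at once.
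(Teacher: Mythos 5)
First, a remark on scope: the statement you were asked to address is the Bj\"orner--Walker complementation formula itself, which the paper imports from \cite{bjwa} without proof; your proposal does not prove that theorem but applies it to deduce Theorem~\ref{prop}. Read as an argument for the main theorem, your setup --- taking $s=PSL_2(q)$, identifying its lattice complements as the subgroups $\langle t\rangle$ with $t\in 2B$, noting these form an antichain of atoms, and reducing to the contractibility of each interval $(\langle t\rangle,G)$ --- coincides exactly with the paper's strategy.

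The genuine gap is your core claim that $t$ lies in a \emph{unique} maximal subgroup of $G$. This is false, and the specific error is the assertion that the split-torus normaliser $D_{2(q-1)}$ contains only $2A$-involutions. Its cyclic part does: the image of $\mathrm{diag}(-1,1)$ lies in $PSL_2(q)$ because $-1$ is a square when $q\equiv 1 \pmod 4$. But the reflections of $D_{2(q-1)}$ are images of matrices $\left(\begin{smallmatrix}0&1\\ a&0\end{smallmatrix}\right)$ of determinant $-a$, and as $a$ ranges over $\FF_q^{*}$ half of these determinants are non-squares; those reflections lie outside $PSL_2(q)$, i.e.\ in class $2B$. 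A double count (using $|C_G(2B)|=2(q+1)$) then shows that a fixed $t\in 2B$ lies in $(q+1)/2$ conjugates of $D_{2(q-1)}$ and in $1+(q+1)/2$ conjugates of $D_{2(q+1)}$, only one of which is $C_G(t)$ --- exactly the counts recorded in the paper's lemma. Hence $(\langle t\rangle,G)$ is not a cone with apex $C_G(t)$, and your argument collapses at this step. The paper instead proves contractibility of this ascending link by an actual deformation: it first determines that any two of these dihedral overgroups meet in $\langle t\rangle$ or in a four-group of shape $2A_1B_2$, then uses Lemma~\ref{link} to delete each $D_{2(q-1)}$ and each $D_{2(q+1)}\neq C_G(t)$ one at a time (each has a unique $2A_1B_2$ below it in the link, so the relevant open interval is a single point, hence contractible), after which the remaining poset is a cone with apex $C_G(t)$. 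Your observations about the subfield subgroup $PGL_2(p^{2^{n-1}})$ lying inside $PSL_2(q)$ and about the Borel subgroup containing only $2A$-involutions are correct, but they do not rescue the uniqueness claim.
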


We use the above result in conjunction with the following specific case of a result of Kratzer and Th\'evenaz \cite[Proposition 1.6]{kt2}.

\begin{lemma}[Kratzer--Th\'evenaz] \label{link}
Let $G$ be a finite group with order complex $\wh$ and let $H \sg G$.
If the subposet $(H,G) = \{K \mid H<K<G\}$ $($or dually $(1_G,H) = \{K \sg G \mid 1<K<H\})$ is contractible, then $\wh$ is homotopy equivalent to $\wh \setminus \{H\}$.
\end{lemma}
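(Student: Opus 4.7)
The plan is to analyze the star--link decomposition of $\Delta(\wh)$ at the vertex $H$. The closed star $\mathrm{st}(H)$ consists of all simplices of $\Delta(\wh)$ that contain $H$, and since it is the simplicial cone with apex $H$ over the link $\mathrm{lk}(H)$, it is automatically contractible. On the other hand the deletion subcomplex is exactly $\Delta(\wh \setminus \{H\})$, and one has the pushout decomposition
\[\Delta(\wh) \;=\; \Delta(\wh \setminus \{H\}) \;\cup_{\mathrm{lk}(H)}\; \mathrm{st}(H).\]
Thus it suffices to show that the inclusion $\mathrm{lk}(H) \hookrightarrow \mathrm{st}(H)$ is a homotopy equivalence, for then the statement follows from the gluing lemma applied to this cofibration.

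The central observation is combinatorial: any chain in $\wh$ which passes through $H$ splits uniquely into its subchain in $(1_G,H)$ lying below $H$ and its subchain in $(H,G)$ lying above $H$. Translating this to simplices of $\Delta(\wh)$ that are compatible with the vertex $H$ yields the identification
\[\mathrm{lk}(H) \;=\; \Delta\bigl((1_G,H)\bigr) \,*\, \Delta\bigl((H,G)\bigr),\]
where $*$ denotes the simplicial join. I would verify this identification by a direct check on simplices.

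To finish I would invoke the standard topological fact that a join $A*B$ is contractible whenever either factor is contractible (an explicit contraction along the join parameter $t$ towards a chosen point of the contractible factor makes this transparent). By hypothesis one of $\Delta((1_G,H))$ or $\Delta((H,G))$ is contractible, so $\mathrm{lk}(H)$ is contractible, hence so is $\mathrm{st}(H)$ (as a cone over it), and the inclusion $\mathrm{lk}(H)\hookrightarrow \mathrm{st}(H)$ is a homotopy equivalence between CW complexes. Since this is an inclusion of subcomplexes it is a cofibration, and cobase change of an acyclic cofibration yields that
\[\Delta(\wh \setminus \{H\}) \;\hookrightarrow\; \Delta(\wh \setminus \{H\}) \cup_{\mathrm{lk}(H)} \mathrm{st}(H) \;=\; \Delta(\wh)\]
is a homotopy equivalence, which is the desired conclusion.

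The only step requiring genuine care is the join identification of $\mathrm{lk}(H)$; the topological input is classical and not likely to be the main obstacle. One could alternatively derive the conclusion via Quillen's fiber lemma (stated earlier in the excerpt) by setting up a closure-type poset map that sends $H$ to a suitable neighbour, but the star--link argument above is more direct and avoids having to make an artificial choice at $H$.
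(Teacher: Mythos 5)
The paper itself gives no proof of this lemma: it is imported verbatim from Kratzer--Th\'evenaz \cite[Proposition 1.6]{kt2}, so there is nothing internal to compare against. Your argument is correct and is essentially the standard proof of that proposition: the identification $\mathrm{lk}(H) = \Delta((1_G,H)) * \Delta((H,G))$ is exactly the right combinatorial observation, the join with a contractible (or empty-complemented) factor is contractible, and the gluing step via cobase change of the acyclic cofibration $\mathrm{lk}(H) \hookrightarrow \mathrm{st}(H)$ is sound, since inclusions of CW subcomplexes are Hurewicz cofibrations and acyclic such cofibrations are inclusions of deformation retracts, a property preserved by pushout. Two small points of hygiene: the closed star is the subcomplex \emph{generated} by the simplices containing $H$ (the set of such simplices alone is not face-closed), and it is worth noting explicitly that your argument degenerates gracefully when one of the open intervals is empty, since $\emptyset * B = B$ and the hypothesis then forces the other interval to be the contractible one. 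Neither affects the validity of the proof.
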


We now consider the case where $G \cong PGL_2(q)$, where $q=p^{2^n}$, $p \geq 3$ is an odd prime and $n \geq 1$.
These groups contain a unique conjugacy class of involutions in $G \setminus \textnormal{soc}(G)$ \cite[Table 4.5.1]{gls3} which we denote $2B$, and the centraliser in $G$ of an element $x \in 2B$ is isomorphic to $D_{2(q+1)}$.
The subgroups $\langle x \rangle$ where $x \in 2B$ will be our set of complements in the application of Lemma \ref{complem}.
We shall then show that the subcomplex $(\langle x \rangle,G) = \{H \sg G \mid \langle x \rangle < H < G\}$ is contractible so that, by Lemma \ref{link}, removing the subgroups $\langle x \rangle$ from $\wh$ does not change the homotopy type of $\wh$.
This will yield that $\wh$ has the homotopy type of an empty wedge of spheres, in other words, a single point, and so $\wh$ is contractible.

It remains, then, to prove the following.

\begin{lemma} Let $G \cong PGL_2(q)$ where $q=p^{2^n}$, $p \geq 3$ and $n \geq 1$.
Let $2B$ denote the unique conjugacy class of involutions in $G \setminus \textnormal{soc}(G)$ and let $x \in 2B$.
The ascending link of $x$ is contractible.
\end{lemma}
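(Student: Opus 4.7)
The strategy is to prove that the poset $P := (\langle x \rangle, G)$ is homotopy equivalent to its subposet $Q := \{ H : \langle x \rangle < H \leqslant C \}$, where $C := C_G(x) \cong D_{2(q+1)}$. Since $C$ is the maximum of $Q$, this subposet is contractible, and the lemma will follow.

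The first step is to enumerate the overgroups of $\langle x \rangle$ using Dickson's classification: each proper subgroup of $G$ is cyclic, dihedral, of Borel type, one of $A_4$, $S_4$, $A_5$, or a subfield subgroup $PSL_2(q_0)$ or $PGL_2(q_0)$. Since $q = p^{2^n}$ with $n \geqslant 1$, the integer $q$ is an odd square and hence $q \equiv 1 \pmod 8$. Because $A_4$ and $A_5$ admit no homomorphism onto $C_2$, every such subgroup of $G$ lies inside $\textnormal{soc}(G) = PSL_2(q)$; the congruence $q \equiv 1 \pmod 8$ moreover rules out the configuration $S_4 \leqslant G$ with $S_4 \cap PSL_2(q) = A_4$, forcing $S_4$ subgroups into the socle too. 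The subfield subgroups $PGL_2(q_0)$ also land in the socle, since $[\mathbb{F}_q : \mathbb{F}_{q_0}] = 2$ makes every element of $\mathbb{F}_{q_0}^*$ a square in $\mathbb{F}_q^*$. Finally, Borel subgroups contain only involutions of class $2A$. Hence every $H \in P$ is either cyclic or dihedral. A cyclic overgroup lies inside the non-split torus $T_{q+1} \leqslant C$ containing $x$ and so belongs to $Q$; a dihedral overgroup $D_{2m}$ either has $x$ as central involution (in which case $D_{2m} \leqslant C$) or as a reflection (in which case $D_{2m}$ lies in the dihedral normalizer $N_G(\langle r \rangle) = D_{2(q \pm 1)}$ of its rotation subgroup $\langle r \rangle$).

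The intended retraction is $f(H) := H \cap C = C_H(x)$. The only $H \in P$ with $f(H) = \langle x \rangle$ are the dihedral groups $D_{2m}$ having $m$ odd and $x$ a reflection; call these \emph{bad}. For such an $H$, the cyclic $\langle r \rangle$ of odd order $m > 1$ lies in a unique maximal torus of $G$, so $N_G(\langle r \rangle)$ is the only maximal subgroup of $G$ containing $H$. The upper link $\{K \in P : K > H\} = \{K : H < K \leqslant N_G(\langle r \rangle)\}$ therefore has a maximum and is contractible, and Lemma \ref{link} permits the removal of $H$ from $P$ without changing the homotopy type. Iterating this over all bad subgroups yields a subposet $P' \simeq P$ on which $f$ is everywhere defined; each successive step is valid because the maximum $N_G(\langle r \rangle)$, being of the form $D_{2(q \pm 1)}$, is itself not bad and so is never removed. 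On $P'$, the map $f$ is order-preserving, idempotent, satisfies $f(H) \leqslant H$, and has image precisely $Q$, so Quillen's fiber lemma yields $P' \simeq Q$, whence $P \simeq Q \simeq \ast$.

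The principal obstacle is the classification step: one must verify that no $S_4 \leqslant G$ has $x$ as a transposition (a configuration that arises for $q \equiv \pm 3 \pmod 8$) and that each subfield subgroup $PGL_2(q_0)$ lies in the socle. Both facts rely essentially on the hypothesis that $q$ is an even power of $p$; without it, additional non-dihedral, non-cyclic overgroups of $\langle x \rangle$ would appear and the retraction above would need to be modified substantially.
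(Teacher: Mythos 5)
Your argument is correct, but it takes a genuinely different route from the paper. The paper first restricts (via Quillen's fiber lemma) to the subposet of intersections of maximal subgroups, so that the ascending link of $\langle x\rangle$ consists only of the maximal dihedral subgroups $D_{2(q\pm1)}$ containing $x$ together with the Klein four-groups $2A_1B_2$ arising as their pairwise intersections; it counts these three families and deletes them one at a time using Lemma \ref{link}, leaving a cone under $C=C_G(x)$. You instead work in the full interval $(\langle x\rangle,G)$: you classify all overgroups of $\langle x\rangle$ by Dickson's theorem, use $q\equiv 1\pmod 8$ to force $A_4$, $S_4$, $A_5$ and the subfield subgroups into the socle and to exclude Borel-type subgroups, and then retract onto the contractible subposet $Q=\{H \mid \langle x\rangle < H \leqslant C\}$ via the order-preserving map $H\mapsto C_H(x)\leqslant H$, after first deleting the ``bad'' odd dihedral overgroups (where $x$ is non-central), whose upper links retain the maximum $N_G(\langle r\rangle)\cong D_{2(q\pm1)}$ throughout the iteration. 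What your approach buys is independence from the reduction to intersections of maximals and from the explicit enumeration of the link; what it costs is reliance on the full subgroup classification of $PGL_2(q)$ and the extra bookkeeping with the bad subgroups. Three small points to tighten: the subfield subgroups $PGL_2(q_0)$ occurring here have degree $[\mathbb{F}_q:\mathbb{F}_{q_0}]$ equal to a power of $2$, not necessarily $2$, though evenness of the degree is all your squares argument needs; in the bad case you should note explicitly that $p\nmid m$ (a dihedral group whose rotation subgroup has order $p$ lies in a Borel subgroup and so contains no $2B$-involution), since otherwise $\langle r\rangle$ would not lie in a maximal torus; and your deletions are performed inside the interval $(\langle x\rangle,G)$ rather than in $\wh$, which is justified by the general lattice form of the Kratzer--Th\'evenaz result of which Lemma \ref{link} is the special case stated in the paper.
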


\begin{proof}
Let $x$ be a fixed element of $2B$ and let $C:=C_G(x) \cong D_{2(q+1)}$.
The maximal subgroups of $G$ are known \cite[Table 8.7]{bhrd} and those which contain $x$ are isomorphic to $D_{2(q-1)}$ or $D_{2(q+1)}$.
For any pair of dihedral subgroups of $G$, a counting argument shows that the their rotation subgroups intersect trivially, and so the only possible non-trivial intersection between two such dihedral groups is $\langle x \rangle$ or a four group whose non-trivial elements are $x$ and a pair of involutions from the class $2A$.

Hence, the ascending link of $\langle x \rangle$ consists of subgroups of these three isomorphism types.
An easy calculation shows these are:\begin{enumerate}
\item $1+(q+1)/2$ subgroups isomorphic to $D_{2(q+1)}$, one of which is $C$;
\item $(q+1)/2$ subgroups isomorphic to $D_{2(q-1)}$; and,
\item $(q+1)/2$ subgroups of shape $2A_1B_2$.
\end{enumerate}
Since $C_G(2A) \cong D_{2(q-1)}$, it is clear that in the ascending link of $\langle x \rangle$, each maximal subgroup isomorphic to $D_{2(q-1)}$ contains a unique subgroup of shape $2A_1B_2$, and so these can be removed without changing the homotopy type.
Similarly, each of the subgroups isomorphic to $D_{2(q+1)}$ except for $C_G(x)$ can be removed without changing the homotopy type.
Finally, we are left with the $(q+1)/2$ subgroups of shape $2A_1B_2$, contained in the unique maximal subgroup $C_G(x)$.
We see, then, that the whole ascending link of $\langle x \rangle$ can be retracted into $\langle x \rangle$ itself, and so $\wh$ is homotopic to a wedge of $|G:C|$ copies of suspensions of a point.
Hence $\wh$ is contractible.
\end{proof}

\section*{Acknowledgement}
The author wishes to thank John Shareshian for helpful comments in the preparation of this paper.

\bibliographystyle{amsplain}
\providecommand{\bysame}{\leavevmode\hbox to3em{\hrulefill}\thinspace}
\providecommand{\MR}{\relax\ifhmode\unskip\space\fi MR }
\providecommand{\MRhref}[2]{%
  \href{http://www.ams.org/mathscinet-getitem?mr=#1}{#2}
}
\providecommand{\href}[2]{#2}

\end{document}